\documentclass[11pt,reqno]{amsart}
\usepackage[utf8]{inputenc}
\usepackage[T1]{fontenc}
\usepackage{lmodern}
\usepackage{amsfonts,amsthm,amsmath,amssymb,thmtools}
\usepackage{graphicx}
\usepackage[dvipsnames,table]{xcolor}   % »table« optons loads »colortbl« package
\usepackage{enumerate}
\usepackage{hyperref}
\usepackage{color}
\usepackage{tikz-cd}
\usepackage{transparent}
\usepackage{fvextra}
\usepackage[margin=1in]{geometry}
\usepackage[
    maxbibnames=99,
    backend=biber,
    style=alphabetic,
    sorting=nyt,
    giveninits=true
]{biblatex}
\DeclareFieldFormat{pages}{#1}
\renewbibmacro{in:}{%
  \ifentrytype{article}
    {}
    {\bibstring{in}%
     \printunit{\intitlepunct}}}
\DeclareFieldFormat
[article,inbook,incollection,inproceedings,patent,thesis,unpublished]
  {title}{\mkbibemph{#1}}
\DeclareFieldFormat{journaltitle}{#1\isdot}
\DeclareFieldFormat[article]{volume}{\mkbibbold{#1}}
\DeclareFieldFormat[article]{number}{\bibstring{number}\addnbspace #1}

\renewbibmacro*{journal+issuetitle}{
  \usebibmacro{journal}
  \setunit*{\addspace}
  \iffieldundef{series}
    {}
    {\newunit
     \printfield{series}
     \setunit{\addspace}}
  \printfield{volume}
  \setunit{\addspace}
  \usebibmacro{issue+date}
  \setunit{\addcomma\space}
  \printfield{number}
  \setunit{\addcolon\space}
  \usebibmacro{issue}
  \setunit{\addcomma\space}
  \printfield{eid}
  \newunit}

\newtheoremstyle{mystyle}%                % Name
  {}%                                     % Space above
  {0}%                                     % Space below
  {\itshape}%                                     % Body font
  {}%                                     % Indent amount
  {\bfseries}%                            % Theorem head font
  {.}%                                    % Punctuation after theorem head
  { }%                                    % Space after theorem head, ' ', or \newline
  {\thmname{#1}\thmnumber{ #2}\thmnote{ (#3)}}%                                     % Theorem head spec (can be left empty, meaning `normal')

\theoremstyle{mystyle}
\newtheorem{Thm}{Theorem}[section]

\newtheorem{Prop}[Thm]{Proposition}
\newtheorem{Conj}[Thm]{Conjecture}

\theoremstyle{definition}

\newtheorem{Cons}[Thm]{Construction}

\theoremstyle{remark}

\declaretheoremstyle[%
  spaceabove=3pt,%reduce or increase between theorem and proof
  spacebelow=10pt,%reduce or increase
  headfont=\normalfont\itshape,%
  postheadspace=.5em,%
  qed=\qedsymbol%
]{mystyle2}

\newcommand{\Z}{\mathbb{Z}}
\newcommand{\Q}{\mathbb{Q}}

\author{Qiuyu Ren}
\address{Department of Mathematics, University of California, Berkeley, Berkeley, CA 94720, USA}
\email{qiuyu\_ren@berkeley.edu}
\title{Families of cosmetic surgeries}

\begin{document}

\begin{abstract}
We construct infinite families of chirally cosmetic surgeries on chiral hyperbolic knots and purely cosmetic surgeries on hyperbolic manifolds with multiple cusps, disproving conjectures that these phenomena do not appear, including Problem 1.12(d) in the K3 problem list. We also give some hints regarding why chirally cosmetic surgeries appear to be more common than purely cosmetic surgeries on $1$-cusped manifolds.
\end{abstract}

\maketitle

\section{Introduction}

Let $M$ be a compact oriented $3$-manifold with a torus boundary. Two different Dehn fillings on $M$ are \textit{purely cosmetic} if they are orientation-preservingly homeomorphic, and \textit{chirally cosmetic} if they are orientation-reversingly homeomorphic. If $M$ admits an orientation-preserving homeomorphism sending a slope $s$ to a slope $s'$ on $\partial M$, then the Dehn fillings $M(s)$ and $M(s')$ are purely cosmetic. The cosmetic surgery conjecture, proposed by Gordon \cite{gordon1990dehn}, posits that the converse is true.
\begin{Conj}[{\cite[Conjecture~6.2]{gordon1990dehn}}]\label{conj:cos}
If $M\ne S^1\times D^2$ is an irreducible compact oriented $3$-manifold with a torus boundary and $s,s'$ are slopes on $\partial M$ so that $M(s)$ and $M(s')$ are purely cosmetic, then $M$ admits an orientation-preserving homeomorphism sending $s$ to $s'$.
\end{Conj}

The study of the cosmetic surgery conjecture has a long history, and several variants of the conjecture have been studied, including the following. We refer readers to \cite{futer2025excluding} and the K3 list \cite[Problem~1.12]{baykur2026k3} for recent surveys.

(1) Restricting to complements of knots in $S^3$, the cosmetic surgery conjecture states that no nontrivial knots in $S^3$ admit purely cosmetic surgeries. This is arguably the most studied case, where the combined work of Ni--Wu \cite{ni2015cosmetic}, Hanselman \cite{hanselman2022heegaard}, Daemi--Eismeier--Lidman \cite{daemi2024filtered} showed, among other constraints, that if the $s$- and $s'$-surgeries on a nontrivial knot $K$ are purely cosmetic, then $K$ has genus $2$, Alexander polynomial $1$, and $\{s,s'\}=\{2,-2\}$. Futer--Purcell--Schleimer \cite{futer2025excluding} verified the conjecture for knots up to $19$ crossings. Ren \cite{ren2025cosmetic} reduced the conjecture to hyperbolic surgeries on hyperbolic knots.

(2) One may restrict the conjecture to cusped hyperbolic manifolds. In this setting, the following conjecture was proposed, relaxing the condition that $M$ has only one cusp.
\begin{Conj}[{\cite[Conjecture~1.3]{futer2025excluding}}]\label{conj:multi}
Let $M$ be a cusped hyperbolic manifold with at least one cusp. If $\mathbf s,\mathbf{s'}$ are two sets of slopes on the cusps so that the fillings $M(\mathbf s)$ and $M(\mathbf{s'})$ are purely cosmetic, then $M$ admits an orientation-preserving symmetry sending $\mathbf s$ to $\mathbf{s'}$.
\end{Conj}

Here, as in the $1$-cusped case, \textit{cosmetic} still means homeomorphic.

(3) One can study a version of Conjecture~\ref{conj:cos} for chirally cosmetic surgeries, although the obvious modification does not work. In addition to plenty of examples on Seifert fibered spaces, \cite{bleiler1999cosmetic} found chirally cosmetic fillings on a $1$-cusped hyperbolic manifold with inequivalent slopes, \cite{futer2025excluding} found more such pairs, and \cite{ichihara2018cosmetic} found such a pair with the fillings being hyperbolic. Nevertheless, when restricting to knot complements, the following conjecture was proposed.

\begin{Conj}[{\cite[Question~1]{ito2021note}\cite[Conjecture~1]{ichihara2023constraints}}]\label{conj:chiral}
If $K$ is a chiral knot that is not a torus knot $T(2,n)$, then $K$ does not admit chirally cosmetic surgeries.
\end{Conj}

This conjecture appears in the K3 problem list \cite{baykur2026k3} as Problem 1.12(d). It has been verified for some infinite families of knots (see e.g. \cite{ito2026non} and references therein), as well as hyperbolic knots up to $15$ crossings \cite{futer2025excluding}.

\medskip

In this paper, we present a simple construction of infinite families of cosmetic surgeries. The construction is flexible enough to disprove Conjecture~\ref{conj:multi} if the multi-slopes $s,s'$ are allowed to have $\infty$ components (meaning some cusps are allowed to be unfilled), as well as Conjecture~\ref{conj:chiral} and its weaker hyperbolic version stated in \cite[Conjecture~1.6]{futer2025excluding}.

\begin{Thm}\label{thm:multi}
For any $n\ge3$, there exists an $n$-cusped hyperbolic manifold $M$ and slopes $s,s'$ on the first cusp, such that the Dehn fillings $M(s,\infty,\cdots,\infty)$ and $M(s',\infty,\cdots,\infty)$ are hyperbolic and purely cosmetic, and that no homeomorphism of $M$ sends one set of slopes to the other.
\end{Thm}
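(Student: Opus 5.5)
Building the required $M$ directly is hard, so I would instead cut an already-hyperbolic manifold along a surface that carries a hidden symmetry, and obtain the cosmetic pair by regluing across that surface.

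The plan is a cut-and-reglue construction. Start with a hyperbolic $3$-manifold $N$ with $n-1$ cusps together with a properly embedded surface $F\subset N$ admitting an orientation-preserving involution. A good choice is a twice-punctured disk, i.e. a thrice-punctured sphere spanning an unknotted circle $C$ that meets two strands. Let $M=N\setminus\nu(C)$, where $C$ is a simple closed curve encircling a "clasp" region, so that $M$ has $n$ cusps and the first cusp is $\partial\nu(C)$.

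Filling $C$ with slope $1/k$ twists the remaining cusps along the disk bounded by $C$. I would choose $N$ to contain two such twisting circles $C$ and $C'$ exchanged by an orientation-preserving involution $\tau$ of the complement of the other cusps. Equivalently, choose $M$ so that $M(1/k,\infty,\dots)$ and $M(1/k',\infty,\dots)$ are related by a mutation or flype along an essential Conway sphere or thrice-punctured sphere. Concretely, I would take a link $L=C\cup L_2\cup\dots\cup L_n$ in $S^3$ and arrange that $-1$ surgery and $+1$ surgery on $C$ give isotopic links. The isotopy would come from a half-twist symmetry: $C$ bounds a disk hitting $L_2\cup\dots\cup L_n$ in two points, and rotating that twice-punctured disk by $\pi$ conjugates a full positive twist into a negative twist up to a global orientation-preserving flip. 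With $n\ge3$ there are at least two further components, which provides room to break the ambient symmetry of $M$ itself while keeping the fillings homeomorphic.

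The key steps are as follows.
\begin{enumerate}
\item Give an explicit link diagram of $L$ and verify that $M=S^3\setminus\nu(L)$ is hyperbolic, via SnapPy or an angle-structure/Menasco alternating argument.
\item Exhibit the homeomorphism $M(s)\cong M(s')$ directly from the diagram by an isotopy: a twist along a disk, followed by a symmetry of the remaining link.
\item Show that the fillings are hyperbolic. By Thurston's hyperbolic Dehn filling theorem this holds once $s,s'$ are long enough, so I would pick the family of slopes $1/k$ with $k$ large, which also provides infinitely many examples.
\item Show that no homeomorphism of $M$ carries $s$ to $s'$.
\end{enumerate}

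For the last step, compute the symmetry group of $M$, for instance with SnapPy for a base case with $n=3$. Then check that every symmetry preserving the first cusp acts on its slopes in a way that does not send $s$ to $s'$; comparing normalized cusp shapes or the lengths of $s$ and $s'$ in the maximal cusp suffices. For general $n$, I would produce $M_n$ from $M_3$ by adding further components, such as chains of augmented circles, in a way that keeps hyperbolicity and the cosmetic isotopy. The rigidity of symmetries under such augmentation, or a length/volume argument distinguishing $s$ from $s'$, would handle all $n$.

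I expect the main obstacle to be designing the link so that the fillings are homeomorphic while no symmetry of $M$ itself exchanges the slopes. The natural symmetric configurations tend to extend to symmetries of $M$. The fix is to make the homeomorphism of the fillings use the filled solid torus essentially, for example via a twist along a disk that only exists after filling. This is why at least two additional cusps are needed.
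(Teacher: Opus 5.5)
There is a genuine gap, and Step~3 makes the required conclusion impossible. Suppose both cosmetic slopes on the first cusp are long (e.g.\ $s=1/k$, $s'=-1/k$ with $k$ large). Then by Thurston's hyperbolic Dehn filling theorem the core of the filled solid torus is the unique shortest geodesic in both $M(s,\infty,\dots,\infty)$ and $M(s',\infty,\dots,\infty)$. By Mostow rigidity, a homeomorphism between these fillings can be replaced by an isometry, and that isometry must carry core to core. Restricting it to the complement of the cores gives a self-homeomorphism of $M$ that preserves the first cusp and sends $s$ to $s'$, which is exactly what the theorem forbids. So a pair of long slopes on one cusp can only work if the homeomorphism of fillings sends the new core to a \emph{different} short geodesic already lying in $M$, and nothing in your setup provides one.

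Your proposed mechanism also fails on its own terms. The handedness of a full twist is invariant under orientation-preserving homeomorphisms: flipping the twice-punctured disk over by a rotation through $\pi$ conjugates a positive full twist to a positive full twist. Exchanging $+1$ and $-1$ twisting therefore needs a reflection, which would give chirally, not purely, cosmetic fillings. Moreover, if the identification came from a symmetry of $(S^3,L)$ preserving $C$, that symmetry would itself be a homeomorphism of $M$ taking $s$ to $s'$. Your fallback of using ``a twist along a disk that only exists after filling'' is exactly what the systole argument rules out for long slopes. For short slopes such as $\pm1$ you supply neither a link nor a mechanism, so Steps~1, 2 and~4 have nothing concrete to act on.

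The missing idea is Construction~\ref{cons:main}. Start from a hyperbolic $N$ with a symmetry $\phi$ that cyclically permutes cusps $T_1,\dots,T_k$ and whose first-return map $\phi^k|_{T_1}$ moves a slope $s$. Set $M=N(\infty,\phi(s),\dots,\phi^{k-1}(s),\infty,\dots,\infty)$. Then $\phi$ extends to a homeomorphism $M(s,\infty,\dots,\infty)\to M(\phi^k(s),\infty,\dots,\infty)$ that sends the core at $T_1$ to the core at $T_2$, a short geodesic inside $M$; this is how the obstruction above is evaded. Your passing remark about two circles $C,C'$ exchanged by an involution $\tau$ is the case $k=2$, but it yields nothing because $\tau^2=\mathrm{id}$ returns $s$ itself.

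For orientation-preserving $\phi$, the first-return map must be a rotation of order $3$, $4$ or $6$ of a rigid cusp, and Proposition~\ref{prop:no_pure} shows that extra cusps are then forced. The paper realizes this with the $6k$-fold maximal abelian cover of a surgery on a knot $K\subset S^2(2,3,6)\times I$ representing the meridian of the order-$6$ arc. This cover has $2k$ cusps, and taking $k=n-1$ gives $M$ with $n$ cusps. The final requirement comes from choosing $K$ with no symmetries or hidden symmetries. A systole argument then shows that the symmetry group of $M$ is $\Z/2$, generated by $\phi^{3k}$, which acts on $T_1$ as a rotation by $\pi$ and so cannot send $s$ to $\phi^k(s)$.
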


\begin{Thm}\label{thm:chiral}
There exist infinitely many asymmetric (chiral) knots in $S^3$ with chirally cosmetic hyperbolic surgeries.
\end{Thm}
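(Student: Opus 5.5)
I will build these knots from a "twisted" construction. Take a link $L = K_0\cup c_1\cup c_2$ in $S^3$ with the following properties. The components $c_1,c_2$ are unknots bounding disjoint disks $D_1,D_2$ meeting $K_0$. There is an orientation-reversing homeomorphism $\phi$ of the complement of $c_1\cup c_2$ (equivalently, of $S^3$) that preserves $K_0$ and exchanges $c_1$ and $c_2$. For example, take $K_0\cup c_1\cup c_2$ amphichiral in a way that swaps the two unknots.

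Doing $-1/m$ surgery on $c_1$ and $-1/m'$ surgery on $c_2$ twists $K_0$ along the two disks. This gives a knot $K_{m,m'}\subset S^3$. Its $p/q$-surgery is the filling $M(p/q,-1/m,-1/m')$ of the 3-cusped complement $M=S^3\setminus L$. The map $\phi$ reverses orientation and, in the framings fixed by the disks, sends slope $a/b$ on $c_1$ to $-a/b$ on $c_2$, and sends $p/q$ on $K_0$ to $-p/q$ (up to a framing shift if $\mathrm{lk}$ terms appear). Hence
\[
M(p/q,-1/m,-1/m')\cong -M(-p/q,1/m',1/m).
\]
To close the loop I need $K_{m,m'}$ and $K_{-m',-m}$ to be the same knot. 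So I want the asymmetric twist parameters to produce a knot that is carried back to itself by the symmetry up to a change in the surgery coefficient. The cleaner route is to arrange the link so that the surgery slopes on $K_0$ get exchanged nontrivially: choose $L$ so that $\phi$ sends the $K_0$-slope $r$ to a different slope $r'$, using a framing mismatch from linking numbers $\mathrm{lk}(K_0,c_i)$ of opposite signs. The twisting then shifts the framing of $K_0$ by $m\,\mathrm{lk}(K_0,c_1)^2$ versus $m\,\mathrm{lk}(K_0,c_2)^2$. I will choose $m'=m$, so the filled manifold with both twists matches under $\phi$ while the knot surgery coefficients become $r$ and $r'$ with $r'\neq -r$ in a controlled way. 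This gives $K_m(r)\cong -K_m(r')$ with $r\ne r'$.

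The steps, in order:
\begin{enumerate}
\item Exhibit an explicit $L$, for instance from a SnapPy census of 3-cusped hyperbolic amphichiral links with the right symmetry, and verify hyperbolicity of $M$ and the symmetry $\phi$ by a computer check.
\item Compute the slope correspondence under $\phi$ on all three cusps.
\item Apply Thurston's hyperbolic Dehn surgery theorem: for all but finitely many $m$, the fillings $M(\cdot,-1/m,-1/m)$ and the relevant surgeries on $K_m$ are hyperbolic.
\item Show the $K_m$ are distinct, e.g.\ by hyperbolic volume converging to $\mathrm{vol}(M)$ from below.
\end{enumerate}

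The main obstacle is showing that $K_m$ is \emph{asymmetric}, in particular chiral, and that the two slopes are genuinely inequivalent. My plan is to use rigidity under long fillings. For large $m$, the cores of the filled cusps are the shortest geodesics of $S^3\setminus K_m$, so any isometry of $S^3\setminus K_m$ permutes them. It therefore extends to a symmetry of $M$ preserving $K_0$ and carrying the slope pair $(-1/m,-1/m)$ to itself. I would choose $L$ whose symmetry group is generated by $\phi$ alone, which is checkable in SnapPy. Since $\phi$ sends $-1/m$ to $+1/m$, no nontrivial symmetry survives, so $S^3\setminus K_m$ has trivial isometry group. The hard part is making the short-geodesic argument uniform in $m$. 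For that I would invoke the drilling/filling bi-Lipschitz estimates of Futer–Purcell–Schleimer.
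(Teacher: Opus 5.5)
There is a genuine gap. Because your $\phi$ fixes the unfilled component $K_0$ and only swaps $c_1,c_2$, the construction can never give a nontrivial chirally cosmetic pair on a chiral knot.

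By your own computation, $\phi$ carries the exterior of $L$ filled by $(-1/m,-1/m')$ on $(c_1,c_2)$ to the exterior filled by $(1/m',1/m)$. That is, it carries $S^3\setminus K_{m,m'}$ to $S^3\setminus K_{-m',-m}$. But $K_{-m',-m}$ is exactly the mirror image $\overline{K_{m,m'}}$: mirror the surgery diagram and apply $\phi$. So you only obtain $K(r)\cong -\overline{K}(-r)$, which holds for every knot.

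Your fix with $m'=m$ does not change this. The claim that ``the filled manifold with both twists matches under $\phi$'' is false, because $\phi$ negates the twist parameters. No framing mismatch can be manufactured either. The shift $m\,\mathrm{lk}(K_0,c_i)^2$ does not see the sign of the linking number. Moreover, $\phi$ is an orientation-reversing homeomorphism of $S^3$ fixing $K_0$, so it sends the slope $r$ on $K_0$ to exactly $-r$.

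The only way to make the filling $\phi$-invariant is $m'=-m$. But then $\phi$ extends to an orientation-reversing self-homeomorphism of $S^3\setminus K$, so $K$ is amphichiral and the pair is the trivial one. In general, a symmetry of $N$ fixing the unfilled cusp has two options. Either it preserves the filling, and then it is already a symmetry of $M$ carrying $r$ to $r'$. Or it does not, and then it only relates fillings of $M$ to fillings of a different filling of $N$. Note also that your asymmetry step (no nontrivial symmetry of $N$ survives the filling) rules out precisely the homeomorphism the first half of your argument needs.

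The missing idea is that the symmetry must move the unfilled cusp into the orbit of the filled cusps. The paper uses a $3$-component hyperbolic Brunnian link $L_1\cup L_2\cup L_3$ whose symmetry group $\Z/6$ is generated by an orientation-reversing $\phi$ cycling $L_1\to L_2\to L_3\to L_1$. Take $s=1/q$ on $L_1$ and fill $L_2,L_3$ by $\phi(s),\phi^2(s)$, which are $\pm1/q$. Then $\phi$ maps $N(s,\phi(s),\phi^2(s))$ onto $N(\phi^3(s),\phi(s),\phi^2(s))$, giving $M(s)\cong -M(\phi^3(s))$. Yet $\phi$ does not extend to $M$, since it sends the unfilled cusp to a filled one.

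Two features of the link matter here:
\begin{itemize}
\item The cycle length is odd, so $\phi^3$ is orientation-reversing and $\phi^3(1/q)=-1/q\ne 1/q$. An orientation-preserving symmetry of $(S^3,L)$ fixes every slope on a component it preserves, so an even cycle would not work.
\item The Brunnian property makes $L_2\cup L_3$ an unlink with zero linking numbers with $L_1$. Hence the result is a knot in $S^3$ with unchanged framing.
\end{itemize}

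Your remaining steps then go through essentially as in the paper: Thurston's theorem for hyperbolicity, the cores of the fillings being the shortest geodesics, and a computed symmetry group of $N$ to get asymmetry. No uniform Futer--Purcell--Schleimer estimate is needed. Thurston's theorem already gives the short-geodesic property for all sufficiently long slopes, which suffices for infinitely many knots.
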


In Section~\ref{sec:general}, we present a general construction of (pure or chiral) cosmetic surgeries. We include a discussion of why the construction fails to yield purely cosmetic surgeries on $1$-cusped hyperbolic manifolds, the setting most closely related to Conjecture~\ref{conj:cos}. In Section~\ref{sec:multi} and Section~\ref{sec:chiral}, we prove Theorem~\ref{thm:multi} and Theorem~\ref{thm:chiral}, respectively.

\section*{Acknowledgments}
We thank Ian Agol for his continued guidance and support. This work is motivated by computer experiments carried out by the author on the $1$-cusped hyperbolic $10$- and $11$-tetrahedra census manifolds \cite{li2025complete,li_snappy_11_tets_2026}, using the code accompanying \cite{futer2025excluding}. In these (non-exhaustive) experiments, many more chirally cosmetic pairs than in \cite{futer2025excluding} were found, but no purely cosmetic pairs were found. We thank the authors of \cite{futer2025excluding} for making their code available. We thank Nathan Dunfield and Shana Li for their invaluable help with the software SnapPy \cite{SnapPy}. We thank David Futer, Marc Kegel, Daniel Ruberman, Saul Schleimer, and Jonathan Zung for helpful discussions.

\section{The general construction}\label{sec:general}
For simplicity, we work exclusively with oriented hyperbolic $3$-manifolds, although the general construction below works for all compact $3$-manifolds with toroidal boundaries. A \textit{slope} on a torus is either an isotopy class of unoriented simple closed curves, or $\infty$. We follow Thurston's convention that the $\infty$-filling of a cusp corresponds to leaving the cusp unfilled. By Mostow rigidity \cite{mostow1968quasi}, every homeomorphism between hyperbolic $3$-manifolds is homotopic to an isometry. We shall use \textit{symmetry} to refer to a self-isometry of a hyperbolic $3$-manifold, and \textit{symmetry group} to refer to its isometry group, which is isomorphic to its mapping class group (\cite{waldhausen1968irreducible,gabai2003homotopy}).

\begin{Cons}\label{cons:main}\mbox{}\smallskip\\
\textbf{Input}:\vspace{-4pt}
\begin{itemize}
\item A cusped hyperbolic manifold $N$ with $\partial N=T_1\sqcup\cdots T_{k+\ell}$, $k\ge2$, $\ell\ge0$;
\item A symmetry $\phi\colon N\to N$ permuting the first $k$ cusps, sending $T_i$ to $T_{(i\bmod k)+1}$, $i=1,\cdots,k$;
\item A slope $s$ on $T_1$ not preserved under $\phi^k|_{T_1}$.
\end{itemize}
\textbf{Output}:
The manifold $$M:=N(\infty,\phi(s),\phi^2(s),\cdots,\phi^{k-1}(s),\infty,\cdots,\infty)$$ obtained by filling the second to the $k$-th cusps of $N$ by slopes $\phi(s),\cdots,\phi^{k-1}(s)$.
\end{Cons}

The output manifold $M$ from Construction~\ref{cons:main} admits cosmetic fillings $M(s,\infty,\cdots,\infty)$ and $M(\phi^k(s),\allowbreak\infty,\cdots,\infty)$, as $\phi$ extends to a homeomorphism between them. The fillings $M(s,\infty,\cdots,\infty)$ and $M(\phi^k(s),\infty,\cdots,\infty)$ are purely or chirally cosmetic, depending on whether $\phi$ is orientation-preserving or orientation-reversing. When the slope $s$ is long enough, Thurston's hyperbolic Dehn filling theorem implies that $M$ is a hyperbolic manifold, with the cores of the fillings isotopic to the first $k-1$ shortest geodesics of $M$, and the rest of the length spectrum of $M$ within any prescribed interval $[0,L]$ is a small perturbation of that of $N$. It can often be arranged so that $M$ has no symmetry sending $s$ to $\phi^k(s)$, or in fact no nontrivial symmetry at all (in which case $M$ is said to be asymmetric).

Using this model construction, one can easily construct infinite families of chirally cosmetic surgeries on asymmetric $1$-cusped hyperbolic $3$-manifolds. Indeed, we show in Section~\ref{sec:chiral} that such manifolds can even be taken to be knot complements in $S^3$. It is interesting and instructive to see why the same fails for purely cosmetic surgeries.

\begin{Prop}\label{prop:no_pure}
No purely cosmetic surgeries can be constructed for $1$-cusped hyperbolic manifolds via Construction~\ref{cons:main}. That is, it is impossible to supply inputs in the construction with $\ell=0$ and $\phi$ orientation-preserving.
\end{Prop}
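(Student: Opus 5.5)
The plan is to show that, when $\ell=0$ and $\phi$ is orientation-preserving, $\phi^k|_{T_1}$ acts on $H_1(T_1)$ as $\pm I$. Then it preserves every slope, so no admissible $s$ exists. The tool is the ``half lives, half dies'' Lagrangian in $H_1(\partial N;\Q)$, combined with an eigenvalue argument for the cyclic permutation of the cusps.

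\textbf{Step 1.} Since $N$ is hyperbolic, its symmetry group is finite, so $\phi^k$ has finite order. Because $\phi$ preserves the orientation of $N$, and $\phi^k$ maps $T_1$ to itself, $\phi^k|_{T_1}$ preserves the induced boundary orientation. Hence $A:=(\phi^k|_{T_1})_*$ is a finite-order element of $SL(2,\Z)$. Such an element is either $\pm I$, which fixes every slope and finishes the proof, or has non-real eigenvalues $\omega,\bar\omega$ with $\omega\neq\bar\omega$ (orders $3,4,6$). It remains to rule out the second case.

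\textbf{Step 2.} Set $V=H_1(T_1;\Q)$ and use $\phi_*$ to identify $H_1(T_i;\Q)$ with $V$. Then $H_1(\partial N;\Q)\cong V^k$, since $\ell=0$ means the $T_i$ are all the cusps. In these coordinates $\phi_*$ acts by
\[
(v_1,\dots,v_k)\mapsto(Av_k,v_1,\dots,v_{k-1}).
\]
Its eigenvalues on $V^k\otimes\mathbb C$ are the $2k$ distinct numbers $\lambda$ with $\lambda^k\in\{\omega,\bar\omega\}$, each with a $1$-dimensional eigenspace $E_\lambda$. All of them satisfy $|\lambda|=1$.

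\textbf{Step 3.} $\phi_*$ preserves the intersection form $\langle\cdot,\cdot\rangle$ on $H_1(\partial N)$, because it preserves the orientation of $\partial N$. It also preserves $L=\ker(H_1(\partial N;\Q)\to H_1(N;\Q))$, which is a Lagrangian of dimension $k$. From $\langle x,y\rangle=\lambda\mu\langle x,y\rangle$ for $x\in E_\lambda$, $y\in E_\mu$, the spaces $E_\lambda$ and $E_\mu$ are orthogonal unless $\mu=\lambda^{-1}=\bar\lambda$. Nondegeneracy then forces $E_\lambda$ and $E_{\bar\lambda}$ to pair nontrivially. Note $\bar\lambda\neq\lambda$, since $\bar\lambda^k$ is the conjugate of $\lambda^k$, which is non-real.

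\textbf{Step 4.} $L\otimes\mathbb C$ is $\phi_*$-invariant, so it is a sum of $k$ eigenlines. Being isotropic, it cannot contain both $E_\lambda$ and $E_{\bar\lambda}$. But $L$ is defined over $\Q$, so $L\otimes\mathbb C$ is closed under complex conjugation, which maps $E_\lambda$ to $E_{\bar\lambda}$. Thus $L\otimes\mathbb C$ contains $E_\lambda$ if and only if it contains $E_{\bar\lambda}$, which is a contradiction. Hence $A=\pm I$, and every slope $s$ on $T_1$ is preserved by $\phi^k$.

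The only delicate point is Step 3: keeping track of the orientation conventions so that $\phi_*$ is genuinely symplectic, rather than anti-symplectic, on $H_1(\partial N)$. This is exactly where the hypotheses ``orientation-preserving'' and $\ell=0$ enter. If $\phi$ reverses orientation, the relation becomes $\lambda\mu=-1$, which is compatible with a real Lagrangian, so chiral examples remain possible. If $\ell>0$, the Lagrangian need not split compatibly with the first $k$ cusps.
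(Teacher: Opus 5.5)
Your proof is correct, and it takes a genuinely different route from the paper's.

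\textbf{What the paper does.} Both arguments first reduce to the case where $\phi^k|_{T_1}$ has order $3$, $4$ or $6$. The paper then argues geometrically. The quotient $N/\langle\phi\rangle$ is a hyperbolic orbifold with a single cusp, and that cusp must be one of the rigid Euclidean orbifolds $S^2(3,3,3)$, $S^2(2,4,4)$, $S^2(2,3,6)$. Because the acting group is cyclic, the singular locus consists only of arcs and loops. So it meets the unique cusp in an even number of points, not three.

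\textbf{What you do instead.} You replace this parity count with linear algebra on $H_1(\partial N;\Q)$. The twisted cyclic shift has $2k$ distinct non-real eigenvalues of modulus one. Invariance of the intersection form pairs $E_\lambda$ only with $E_{\bar\lambda}$. The half-lives-half-dies Lagrangian is both $\phi_*$-invariant and invariant under complex conjugation, so it is forced to contain such a nontrivially pairing couple, contradicting isotropy. The supporting steps all check out:
\begin{itemize}
\item an invariant subspace of a diagonalizable operator with simple spectrum is a sum of eigenlines;
\item $\bar\lambda\neq\lambda$ because $\lambda^k$ is non-real;
\item $L\neq 0$ since $k\ge 2$;
\item $\phi_*$ is genuinely symplectic, because $\phi$ preserves outward normals and hence the boundary orientation.
\end{itemize}

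\textbf{What each route buys.} Your argument avoids orbifold theory altogether. It uses hyperbolicity only to know that $(\phi^k|_{T_1})_*$ has finite order. It therefore applies to any compact orientable $3$-manifold and any orientation-preserving homeomorphism cyclically permuting all of its boundary tori with that return map of finite order on homology; $\phi$ itself need not be periodic. It also isolates exactly where $\ell=0$ is used: the Lagrangian must live inside $H_1$ of a single cyclic orbit of cusps.

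The paper's argument is shorter given standard orbifold facts. It also gives a geometric picture that directly motivates Section~\ref{sec:multi}. There the quotient orbifold $\mathcal O'$ has two $S^2(2,3,6)$ cusps joined by the three singular arcs, which is precisely how the parity obstruction is evaded once $\ell>0$.
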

\begin{proof}
Assume for contradiction that there is a cusped hyperbolic manifold $N$ with $\partial N=T_1\sqcup\cdots\sqcup T_k$ which admits an orientation-preserving symmetry $\phi$ permuting the cusps cyclically in that order, so that $\phi^k|_{T_1}$ acts nontrivially on the set of slopes on $T_1$. The cusps $T_i$ admit Euclidean structures defined up to global scalings, which are preserved under the isometry $\phi$. Thus, $\phi^k|_{T_1}$ is a finite-order isometry of order $1,2,3,4$ or $6$; orders $1$ or $2$ are impossible by the assumption on the nontrivial action on slopes. The quotient $\bar N:=N/\phi$ is thus a hyperbolic orbifold, whose unique cusp is the Euclidean orbifold $S^2(3,3,3)$, $S^2(2,4,4)$, or $S^2(2,3,6)$, depending on the order of $\phi^k|_{T_1}$ (equivalently, of $\phi^k$). However, $\bar N$ has only arcs and loops of cyclic singularities, so the boundary of the singularity locus cannot consist of exactly three points, a contradiction.
\end{proof}

It is not completely unreasonable to speculate that most cosmetic surgeries do arise from Construction~\ref{cons:main}. If all cosmetic surgeries arise this way, Conjecture~\ref{conj:cos} for hyperbolic manifolds follows from Proposition~\ref{prop:no_pure}.

\section{Purely cosmetic surgeries on multi-cusped hyperbolic manifolds}\label{sec:multi}
We prove Theorem~\ref{thm:multi}. We will apply Construction~\ref{cons:main} to $k=\ell=n-1$.

Let $K$ be a hyperbolic knot in the orbifold $\mathcal O=S^2(2,3,6)\times I$, homotopic in the complement of the singular loci to the meridian of the third singular arc (the arc with order $6$ singularities). We choose $K$ so that its complement has no nontrivial symmetries or hidden symmetries, where a \textit{hidden symmetry} is a symmetry of a finite cover that does not descend to the base. It is standard to arrange for $K$ to be hyperbolic and to have no nontrivial symmetries in a given homotopy class, so we only indicate how to guarantee that $K$ has no hidden symmetries. The point is that by the assumption on the homotopy class, $\pi_1(K)$ normally generates $\pi_1^{orb}(\mathcal O)$, so the argument in \cite[Proof of Lemma~4]{reid1991arithmeticity} shows that if $\mathcal O\backslash K$ has hidden symmetry, then its orientable commensurator quotient (assuming $K$ is non-arithmetic) is a hyperbolic orbifold with only rigid cusps, namely cusps of shapes $S^2(3,3,3)$, $S^2(2,4,4)$, or $S^2(2,3,6)$; see also \cite[Proposition~9.1]{neumann1992arithmetic}. In particular, the cusp field of $\mathcal O\backslash K$ at the cusp $K$ must be $\Q(\sqrt{-1})$ or $\Q(\sqrt{-3})$, which does not happen for a generic $K$. This justifies the choice of $K$.

Let $\mu$ denote a meridian of $K$, and let $\lambda$ denote a longitude of $K$ that is homologous, in the complement of $K$ together with the singular locus, to a meridian of the third singular arc in $\mathcal O$. Pick a slope $r=k\mu+q\lambda$ on $K$ for some large $q$ with $q\equiv5\pmod6$ and perform the $r$-surgery on $K$ to get a hyperbolic orbifold $\mathcal O'=\mathcal O_K(r)$. The first orbifold homology group of $\mathcal O$ is $$H_1^{orb}(\mathcal O)=\Z\{x,y,z\}/(2x,3y,6z,x+y+z)\cong\Z/6,$$ where $x,y,z$ are represented by the meridians along the three singular arcs. The surgery has $$H_1^{orb}(\mathcal O')=\Z\{x,y,z,z_0\}/(2x,3y,6z,x+y+z,z-kz_0)\cong\Z/6k,$$ where $z_0$ is represented by $\mu$. Let $N$ denote the maximal abelian cover of $\mathcal O'$, which is the $6k$-fold cyclic cover associated to the map $\theta\colon\pi_1^{orb}(\mathcal O')\to H_1^{orb}(\mathcal O')\xrightarrow{\cong}\Z/6k$ sending $x,y,z,z_0$ to $3k,2k,k,1$, respectively. Since $\theta$ restricts to the map $\pi_1^{orb}(S^2(2,3,6))\to\Z/6\to\Z/6k$ on each boundary, where the first map gives the $6$-fold regular covering $T^2\to S^2(2,3,6)$ and the second map is the multiplication by $k$, we know that each boundary of $\mathcal O'$ lifts to $k$ cusps in the hyperbolic manifold $N$. Call the lifts $T_1,\cdots,T_k$ and $T_{k+1},\cdots,T_{2k}$. The deck transformation generator $\phi\colon N\to N$ permutes each group of cusps cyclically, and the first return map $\phi^k|_{T_1}$ is an order $6$ rotational symmetry. Thus, we may pick a long slope $s$ on $T_1$, feed the data $(N,\phi,s)$ to Construction~\ref{cons:main}, which yields a hyperbolic manifold $M$ with purely cosmetic surgeries.

The nontrivial symmetry $\phi^{3k}$ extends to a homeomorphism on $M$, fixing the slopes on the cusps. We argue that $M$ can be arranged to have symmetry group exactly $\Z/2$. By first picking the slope $r$ on $K$ long enough, and then the slope $s$ on $T_1$ long enough, we may assume that the first $k-1$ shortest geodesics in $M$ are the cores of the fillings on $N$, and the next shortest geodesic is the lift of the core of the filling on $\mathcal O\backslash K$. Let $N_0\subset N$ denote the lift of $\mathcal O\backslash K$. Then, the systole consideration shows that any symmetry of $M$ comes from a symmetry of $N$, and in fact from a symmetry of $N_0$. Since $N_0$ is a finite cover of $\mathcal O\backslash K$, which is assumed to have no nontrivial symmetries or hidden symmetries, every symmetry of $N_0$ is a deck transformation. The only deck transformations that preserve the cusp $T_2$ together with the slope $\phi(s)$ are the trivial map and $\phi^{3k}|_{N_0}$, hence no other symmetry of $N_0$ extends to $M$. We conclude that $M$ has symmetry group $\Z/2$.

\section{Chirally cosmetic surgeries on chiral knots}\label{sec:chiral}
We prove Theorem~\ref{thm:chiral}. We will apply Construction~\ref{cons:main} to the complement of a $3$-component hyperbolic Brunnian link, with $k=3$, $\ell=0$.

Recall that a \textit{Brunnian link} is a nonsplit link in $S^3$ with at least $3$ components, for which every proper sublink is an unlink. The simplest example of a $3$-component Brunnian link is the Borromean rings. A more complicated $3$-component Brunnian link is shown in Figure~\ref{fig:brun}.

\begin{figure}[htbp]
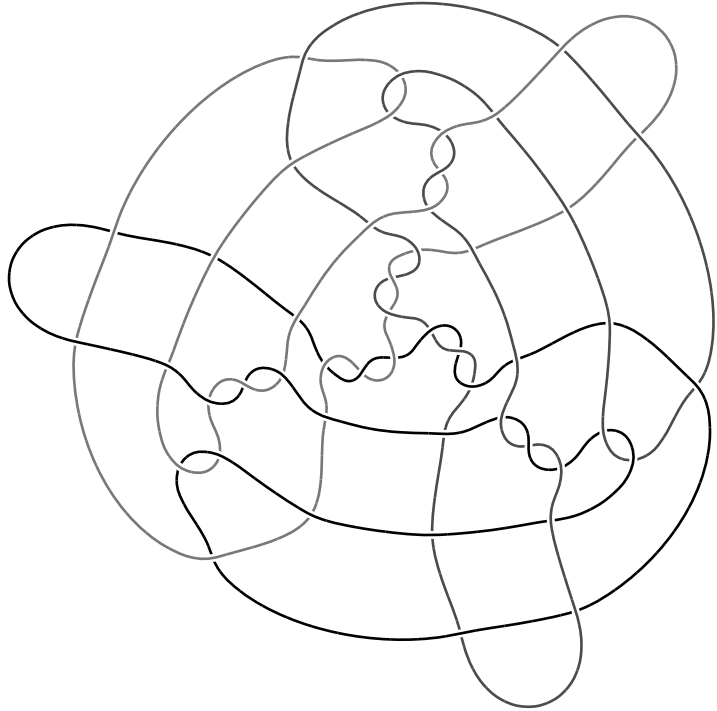

    \centering
    \include{img_L}
    \caption{A $3$-component amphichiral hyperbolic Brunnian link with symmetry group $\Z/6$. The image was created by the program KnotJob \cite{schutz_knotjob_2025}.}
    \label{fig:brun}
\end{figure}

\begin{Prop}\label{prop:brun}
The link $L=L_1\cup L_2\cup L_3$ in Figure~\ref{fig:brun} satisfies the following properties:
\begin{enumerate}
\item It is a $3$-component Brunnian link;
\item It is hyperbolic;
\item The symmetry group of the pair $(S^3,L)$ is $\Z/6$, which is equal to the symmetry group of $N=S^3\backslash L$;
\item A generator $\phi$ of the symmetry group is orientation-reversing on $N$, permuting the components of $L$ cyclically in the forward order.
\end{enumerate}
\end{Prop}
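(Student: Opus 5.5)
This is a statement about one explicit link, so the plan is a mixture of hand verification and rigorous computer verification, with the diagram supplying the needed symmetry. I would treat the four items in the order (1), (4), (2), (3). The reason is that the explicit symmetry visible in Figure~\ref{fig:brun} controls much of the rest.

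\textbf{Step (1): Brunnian.} Nonsplitness will follow from hyperbolicity in (2). For the unlink condition, the order-$6$ symmetry permutes the three components cyclically, so all two-component sublinks are equivalent to one another. It therefore suffices to remove a single component, say $L_3$, and show $L_1\cup L_2$ is an unlink. I would do this by an explicit sequence of Reidemeister moves on the diagram. The individual components are unknots because each visibly bounds a disk once the other two are removed, or by the same move sequence. A Brunnian link with at least $3$ components has no unknotted sublink beyond this to check, so this finishes (1).

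\textbf{Step (4): the generator.} I would write down $\phi$ concretely from the picture. The diagram has a visible $3$-fold rotational structure cycling $L_1\to L_2\to L_3$. I would combine this with a reflection (an inversion or a mirror composed with rotation by $\pi/3$) that carries the diagram to its mirror image, giving an orientation-reversing map of order $6$ on $(S^3,L)$. Checking that $\phi$ sends the diagram to itself is a finite diagrammatic check. Then $\phi^2$ is the orientation-preserving order-$3$ rotation and $\phi^3$ is an orientation-reversing involution. The direction of the cyclic permutation can be fixed by relabeling, so that $\phi$ acts in the forward order.

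\textbf{Steps (2) and (3): hyperbolicity and the symmetry group.} I would use SnapPy with verified computation. Hyperbolicity comes from \texttt{verify\_hyperbolicity}, which certifies a positively oriented solution of the gluing equations by interval arithmetic. For the symmetry group, I would compute the canonical (Epstein--Penner) cell decomposition rigorously, for instance via \texttt{isometry\_signature} or \texttt{symmetry\_group} in the verified mode, possibly after a canonical retriangulation. Its combinatorial automorphism group is the isometry group of $N$, which is isomorphic to the mapping class group by the remarks in Section~\ref{sec:general}.

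I expect this step to be the main obstacle: making the computation genuinely rigorous rather than numerical. If the canonical decomposition is not a triangulation, I would fall back on the verified canonical retriangulation in SnapPy. To pin the order at exactly $6$, I would note that the element from Step (4) already gives a lower bound of $6$, so the computer is only needed for an upper bound of $6$ together with cyclicity. Finally, every symmetry of $N$ must send meridians to meridians, since otherwise it would give a distinct filling of some component yielding $S^3$. This is excluded because each component is unknotted in the complement of the others, so the Gordon--Luecke argument applies componentwise, or I would simply check the cusp action in SnapPy. Hence each symmetry of $N$ extends to $(S^3,L)$, and the two groups coincide, giving (3).
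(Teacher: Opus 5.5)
\textbf{Gap: the symmetry-group computation in (3).} Your handling of (1), (2) and (4) agrees with the paper in substance. The paper also obtains the $\Z/6$ from the visible $3$-fold rotation together with a component-preserving orientation-reversing involution that commutes with it. That involution is exhibited by an isotopy to the mirror image (turn each component's thin spanning ribbon outward by $\pi$), not as a planar symmetry of the diagram. Hyperbolicity comes from \texttt{verify\_hyperbolicity}.

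The problem is (3). Your plan is logically fine only if SnapPy can rigorously produce the canonical decomposition of $N$, and that is exactly what fails here. \texttt{isometry\_signature(verified=True)} \emph{is} the verified canonical retriangulation you name as your fallback. For this $N$ it does not succeed, even after randomizing and raising the precision to $10000$ bits. So as written you have no way to obtain the upper bound $|\mathrm{Sym}(N)|\le 6$, which is the heart of (3).

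\textbf{Missing idea: drill a short geodesic.} Transfer the problem to a manifold where the verified computation does succeed.
\begin{enumerate}
\item A verified length spectrum shows that the six shortest closed geodesics of $N$ (real length $\approx 1.594$) are separated by a gap from the next ones ($\approx 2.370$). So this set of six is invariant under every symmetry.
\item If $|\mathrm{Sym}(N)|>6$, orbit--stabilizer gives a nontrivial symmetry preserving any chosen one of them, $\gamma$. This restricts to a nontrivial symmetry of the drilled manifold $N\setminus\gamma$.
\item The verified canonical triangulation of $N\setminus\gamma$ can be computed, and it has trivial automorphism group. Hence $|\mathrm{Sym}(N)|\le 6$.
\end{enumerate}
Together with $\Z/6\le \mathrm{Sym}(S^3,L)\hookrightarrow\mathrm{Sym}(N)$, this gives all of (3) at once. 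It yields cyclicity and the fact that every symmetry of $N$ extends to a symmetry of $(S^3,L)$.

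\textbf{Your meridian argument is unnecessary and also wrong.} Because each $L_i$ is unknotted, $1/n$-filling on $L_i$ with meridional fillings on the others returns $S^3$. So non-meridional fillings yielding $S^3$ are plentiful. Gordon--Luecke concerns nontrivial knots and rules out nothing here.
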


We postpone the proof of Proposition~\ref{prop:brun} and complete the proof of Theorem~\ref{thm:chiral}. 

Since $\phi$ is a symmetry of $(S^3,L)$, it preserves the meridians and longitudes of $L$ up to signs. Therefore, $\phi^3$ negates the slopes on each component of $L$. Let $s=1/q$ be a long slope on $L_1$. Then $\phi(s)$ and $\phi^2(s)$ are $\pm1/q$ slopes on $L_2$ and $L_3$. We feed the data $(N,\phi,s)$ to Construction~\ref{cons:main}, which yields a hyperbolic manifold $M=S^3\backslash K$ with chirally cosmetic surgeries, where the knot $K$ is the image of $L_1$ in the $(\phi(s),\phi^2(s))$ surgery on $L_2\cup L_3\subset S^3$. When $s$ is long enough, the cores of the fillings are the two shortest geodesics in $M$, so any symmetry of $M$ comes from a symmetry of $S^3\backslash L$. By the description of the symmetry group of $L$, any nontrivial symmetry of $S^3\backslash L$ either maps $L_2$ to another component or maps $\phi(s)$ to a different slope on $L_2$, so no nontrivial symmetry of $S^3\backslash L$ extends to $M$. We conclude that $K$ is asymmetric, with chirally cosmetic fillings.

\begin{proof}[Proof of Proposition~\ref{prop:brun}]
(1) is immediate from the diagram.

We locate a $\Z/6$ subgroup of the symmetry group of $(S^3,L)$. There is an obvious $3$-fold rotational symmetry permuting the components cyclically. There is a component-preserving orientation-reversing involution witnessing the fact that $L$ is amphichiral; this can be seen by looking at the skinny disks bounded by each component, regarded as ribbons, and rotating each of them ``outward'' simultaneously by $\pi$, thereby deforming $L$ continuously to its mirror image (obtained by changing every crossing in the diagram). The involution commutes with the $3$-fold rotational symmetry, hence they generate a $\Z/6$ subgroup of the symmetry group of $L$. Moreover, from the description, a generator of this subgroup satisfies the description in (4).

We prove (2) and (3) using verified computation in SnapPy 3.3.2 \cite{SnapPy}, run in SageMath 10.7 with Python 3.13.3.

We create $L$ using its PD code generated by the program KnotFolio \cite{miller_knotfolio}. Then we take its exterior $N$ and verify its hyperbolicity.
\begin{Verbatim}[fontsize=\footnotesize,breaklines=true,breakanywhere=true]
sage: import snappy
sage: L = snappy.Link([(94,60,95,59), (21,48,22,49), (80,58,81,57), (95,64,96,33), (86,18,87,17), (74,28,75,27), (28,76,29,75), (83,12,84,13), (85,26,86,27), (90,40,91,39), (88,8,89,7), (76,14,77,13), (63,32,64,1), (43,18,44,19), (73,14,74,15), (4,42,5,41), (8,88,9,87), (36,70,37,69), (15,72,16,73), (22,62,23,61), (33,96,34,65), (71,16,72,17), (42,4,43,3), (9,70,10,71), (11,82,12,83), (40,6,41,5), (67,54,68,55), (62,24,63,23), (2,52,3,51), (92,38,93,37), (10,78,11,77), (58,66,59,65), (81,34,82,35), (55,66,56,67), (56,80,57,79), (31,46,32,47), (93,52,94,53), (38,92,39,91), (29,84,30,85), (53,68,54,69), (19,50,20,51), (35,78,36,79), (45,30,46,31), (24,48,25,47), (44,26,45,25), (49,20,50,21), (1,60,2,61), (6,90,7,89)])
sage: N = L.exterior()
sage: N.verify_hyperbolicity()[0]
True
\end{Verbatim}
This proves $L$ is hyperbolic. We check the symmetry group of $N=S^3\backslash L$:
\begin{Verbatim}[fontsize=\footnotesize,breaklines=true,breakanywhere=true]
sage: N.symmetry_group()
Z/6
\end{Verbatim}
However, in the current version of SnapPy, the method \texttt{.symmetry\_group()} uses unverified computation. To verify the computation, one needs to compute the canonical triangulation of $N$ by \texttt{iso = N.isometry\_signature(verified=True)} and \texttt{T = snappy.Triangulation(iso,remove\_finite\_\allowbreak vertices=False)}. This computes the Epstein--Penner decomposition of $N$, with non-tetrahedral cells further canonically triangulated; see \cite[Section~3]{fominykh2016census} for more details. Unfortunately, SnapPy fails to compute the isometry signature of $N$, even after a few randomizations, and with the \texttt{exact\_bits\_prec\_and\_degrees} parameter set to \texttt{[(10000,120)]}. Instead, we try to drill out a short geodesic and compute the symmetry group of the drilled manifold.
\begin{Verbatim}[fontsize=\footnotesize,breaklines=true,breakanywhere=true]
sage: N.randomize()
sage: N.length_spectrum_alt(7,bits_prec=400,verified=True)
[Length                                      Core curve  Word
 1.59356570124401... + 2.71947057945369...*I -           x2X18,
 1.59356570124401... + 2.71947057945369...*I -           x2x24x31X18,
 1.59356570124401... + 2.71947057945369...*I -           x24x31,
 1.59356570124401... - 2.71947057945369...*I -           x4x4X10,
 1.59356570124401... - 2.71947057945369...*I -           x1X15,
 1.59356570124401... - 2.71947057945369...*I -           x4X10,
 2.37009806931195... + 0.00000000000000...*I -           x3X28X5x30X17X9X12X16,
 2.37009806931195... + 0.00000000000000...*I -           x8x28x21x22X30X32X17,
 2.37009806931195... + 0.00000000000000...*I -           x25]
sage: N_drill = N.drill_word("x2X18")
sage: iso = N_drill.isometry_signature(verified=True)
sage: iso
'-cqbvLLLLLAMvvLMLALwvwzLwAvPwAvLQLLwQMvzMLwMPQAPAQQPQPQQQceafafajahalamanamawapaxaqaAarauaBaCaKaJazaLaMaHaRaQaQaGa2aMaOaKaSaPa0a9aYaXaVaUa+aZaebZaabeb1a7a8a7aibkb+a5a6a-aibjblbnbdbcbbbgbfbmbfbkbhbpbgbobpbmbnblblbobkbnbpbviiufvalivvawxboauasvoioahhcahpblfnlknvoicxviachhhahigalbcoaoocngkgsgavhtsaonhcao'
sage: T = snappy.Triangulation(iso,remove_finite_vertices=False)
sage: len(T.isomorphisms_to(T))
1
\end{Verbatim}
The method \texttt{.length\_spectrum\_alt(7)} with the \texttt{verified} parameter set to \texttt{True} rigorously computes the complex lengths of the first seven shortest geodesics up to some precision. The output gives nine short geodesics, because the program could not decide which one among the last three is the shortest (from the symmetry, one can expect that they actually have the same length). Similarly, the first six geodesics shown in the output are not guaranteed to be of equal length. Any symmetry of $N$ permutes the first six shortest geodesics, although not necessarily nontrivially a priori. Nevertheless, if the symmetry group of $N$ has cardinality larger than $6$, each of the six shortest geodesics is fixed by some nontrivial symmetry of $N$; in particular, this implies that the manifold $N_{drill}$ we obtained by drilling out the first geodesic shown in the output has a nontrivial symmetry group. But the symmetry group of $N_{drill}$ is the automorphism group of its canonical triangulation $T$ that we construct from its isometry signature, which is shown to have size $1$. This proves the cardinality of the symmetry group of $N$ is at most $6$. But the symmetry group of $N$ contains the symmetry group of $(S^3,L)$, which is shown to contain a $\Z/6$ subgroup. Hence these symmetry groups are both exactly $\Z/6$.
\end{proof}

\printbibliography

\end{document}